\newtheorem{theorem}{Theorem}
\newtheorem{proposition}[theorem]{Proposition}
\theoremstyle{definition}
\theoremstyle{remark}
\newtheorem{remark}[theorem]{Remark}
\numberwithin{equation}{section}
\begin{document}

\title{The stabilizer of a column in a  matrix group over a polynomial  ring}

\author{Vitaly Roman'kov}\footnote{ This research was supported by the program of basic scientific researches SB RAS I.114, project 0314-2019-0004.}

\begin{abstract}
An original non-standard approach to describing the structure of a column stabilizer in a group of $n \times n$ matrices over a polynomial ring or a Laurent polynomial ring of $n$ variables is presented. The stabilizer is described as an extension of a subgroup of a rather simple structure using the  $(n-1) \times (n-1)$ matrix group of congruence type over the corresponding ring of $n-1$ variables. In this paper, we consider cases where $n \leq 3.$ For $n = 2$, the stabilizer is defined as a one-parameter subgroup, and the proof is carried out by direct calculation. The case $n = 3$ is nontrivial; the approach mentioned above is applied to it. Corollaries are given to the results obtained. 
In particular, we prove that for the stabilizer in the question, it is not generated by its a finite subset together with the so-called tame stabilizer of the given column. We are going  to study the cases when $n \geq 4$ in a forthcoming  paper. Note that a number of key subgroups of the groups of automorphisms of groups are defined as column stabilizers in matrix groups. For example, this describes the subgroup IAut($M_r$) of automorphisms that are identical modulo a commutant of a free metabelian group $M_r$ of rank $r$. This approach demonstrates the parallelism of theories of groups of automorphisms of groups and matrix groups that exists for a number of well-known groups. This allows us to use the results on matrix groups to describe automorphism groups. In this work, the classical theorems of Suslin, Cohn, as well as Bachmuth and Mochizuki are used.

Key words: {matrix group over a ring, elementary matrices,  stabilizer of a column, ring of polynomials, ring of Laurent polynomials, residue, free metabelian group,  automorphism group.}
\end{abstract}

\maketitle

{\small  \tableofcontents}

 \section*{Introduction}

In the group theory, matrix methods have been used by a number of authors to produce new interesting results on endomorphisms and automorphisms of groups. Birman \cite{Bir} has given a matrix characterization of automorphisms of a free group $F_r$ of rank $r$ with basis $\{f_1, ...,  f_r\}$ among arbitrary endomorphisms (the "inverse function theorem") as follows. For an endomorphism $\phi$ define the matrix $J_{\phi}$ = ($d_j\phi (f_i)$), $1\leq i, j\leq r$ (the "Jacobian matrix"{} of $\phi$), where $d_j$ denotes partial Fox derivation (with respect to $f_j$) in the free group ring $ \mathbb{Z}[F_r]$ (see \cite{Ess} or \cite{TimBook}). Then $\phi$ is an automorphism if and only if the matrix $J_{\phi}$ is invertible. 
 
Bachmuth \cite{Bach} has obtained an inverse function theorem of the same kind on replacing the Jacobian matrix $J_{\phi}$ by its image $\bar{J}_{\phi}$  over the abelianized group ring $\mathbb{Z}[F_r/F_r'].$ Thus he established a matrix characterization of automorphisms of a free metabelian group $M_r$. Umirbaev \cite{U}  has generalized Birman's result to primitive systems of free groups,    Roman'kov \cite{RomCrit}, \cite{Prim3} and Timoshenko \cite{TimCrit} have characterized  primitive systems of free metabelian groups. By definition, {\it primitive system} is a system of elements of a relatively free group that can be a part of some basis of this group.

For any commutative associative ring $K$ with identity, an $r\times r$ elementary matrix (transvection) $t_{ij}(a)$ over $K$ is a matrix of the form $E +aE_{ij}$ where $i\not=  j, a \in K$,  $E_{ij}$ is the $r \times r$ matrix whose $(i j)$ component is $1$ and all other components are zero. As usual, $E$ denotes the identity matrix. 
Let SL($r, K$) be  the group of all the $r\times r$ matrices of determinant $1$ whose entries are elements of $K$, and let E($r, K$)  be the subgroup of SL($r, K$) generated by the elementary matrices. 
By $\Lambda_{nk}^{K} = K[a_1, ..., a_k,  a_{k+1}^{\pm 1}, ...,  a_n^{\pm 1}]$ we denote a 
mixed polynomial ring over $K$. In particular,  $\Lambda_{nn}^{K} = K[a_1, ..., a_n]$ is the polynomial ring and $\Lambda_{n0}^{K} = K[a_1^{\pm 1}, ..., a_n^{\pm 1}]$ is the Laurent polynomial ring in $n$ variables over $K.$ 
 
Then the famous Suslin's Stability theorem \cite{Sus} implies that for any $r\geq 3$ and any ring $\Lambda_{nk}^{\mathbb{F}}$,  where  $\mathbb{F}$ is an arbitrary field,  SL($r, \Lambda_{nk}^{\mathbb{F}}$) = E($r, \Lambda_{nk}^{\mathbb{F}}$).  

By GE($r, K$) we denote  the subgroup  of GL($r, K$) generated by E($r, K$) and all diagonal matrices. It follows that for any $r\geq 3$ and any ring $\Lambda_{nk}^{\mathbb{F}}$, GL($r, \Lambda_{nk}^{\mathbb{F}}$) = GE($r, \Lambda_{nk}^{\mathbb{F}}$).

In contrast, GL($2, \Lambda_{nk}^{\mathbb{F}}$) has a number of specific properties. In \cite{Cohn}, Cohn proved that
\begin{equation}
\label{eq:1} 
\left(\begin{array}{cc}
1 +a_1a_2& -a_1^2\\
a_2^2&1-a_1a_2\\
\end{array}\right) \in \rm{GL}(2, \Lambda_{22}^{\mathbb{F}})\setminus \rm{GE}(2, \Lambda_{22}^{\mathbb{F}}).
\end{equation} 
In \cite{BM}, Bachmuth and Mochizuki proved that  if $n\geq 2$ then    
\begin{equation}
\label{eq:2}
\rm{GL}(2,\Lambda_{n0}^{\mathbb{Z}}) \not= \rm{GE}(2, \Lambda_{n0}^{\mathbb{Z}}). 
\end{equation}
Let $M_r$ be the free metabelian group of rank $r$ with basis $\{x_1, ..., x_r\},$  and $A_r = M_r/M_r'$ be the abelianization of $M_r$, the free abelian group with the corresponding basis $\{a_1, ..., a_r\}.$
The group ring $\mathbb{Z}[A_r]$ can be considered as the Laurent polynomial ring $\Lambda_{r0}.$  

For any group $G$, IAut($G$) denotes the subgroup of the automorphism group Aut($G$) consisting of all automorphisms that induce the identity map on the abelianization $G_{ab}=G/G'.$ In the similar way the subsemigroup IEnd($G$) of the endomorphism semigroup End($G$) is defined too. 

In \cite{Bach}, Bachmuth introduced the following embedding:
\begin{equation}
\label{eq:3}
\beta : \rm{IAut}(M_r) \rightarrow  \rm{GL}(r, \Lambda_{r0}^{\mathbb{Z}}), \beta : \phi \mapsto \bar{J}_{\phi}, \phi \in \rm{IAut}(M_r).
\end{equation}
 This  embedding is called {\it Bachmuth's embedding}.

The  image  $\beta$(IAut($M_r$)) 
in GL$_r( \Lambda_{r0}^{\mathbb{Z}})$ consists of all matrices $A$ such that
\begin{equation}
\label{eq:4}
 A\bar{a} = \bar{a}_r  \  \textrm{for} \  \bar{a}_r = \left( \begin{array}{c}
                                    a_1-1\\
a_2-1\\
.\\
.\\
.\\
a_r-1
\end{array}
\right). 
\end{equation}
In other words, IAut($M_r$) = Stab$_{\rm{GL}(r, \Lambda_{r0}^{\mathbb{Z}})}$
($\bar{a}_r$) 
(the stabilizer of $\bar{a}_r$ in GL($r, \Lambda_{r0}^{\mathbb{Z}}$)). Thus, this is an example showing that key subgroups can act as column stabilizers in matrix groups.  In \cite{Shp}, Shpilrain obtained a  matrix characterization   of IA-endomorphisms with non-trivial fixed points  ('eigenvectors') which, although is similar to the corresponding well-known characterization in linear algebra, also reveals a subtle difference. All these and some other results show a wonderful parallelism between the theory of automorphisms and endomorphisms of a free (or free metabelian) group and the theory of linear operators in vector space.

The main goal of this paper is to present an original non-standard approach to the description of column stabilizers in matrix groups over rings. We consider matrix groups over polynomial rings $\Lambda_{nn}^{\mathbb{K}}$ and matrix groups over Laurent polynomial rings $\Lambda_{n0}^{\mathbb{K}}.$ In both cases $K$ is an arbitrary commutative domain with identity element.  For simplicity, we formulate some of statements  only in the following important  cases: $K = \mathbb{Z}$ or $\mathbb{F},$ where $\mathbb{F}$ is an arbitrary field.

In this paper we consider only cases of $n\times n$ matrices for $n \leq 3$. The case $n=3$ is the least non-trivial in the subject. In the forthcoming paper we'll extend our method to the description  of   column stabilizers for the cases $n \geq 4.$ We also restrict ourselves to considering stabilizers of columns of a certain type -- either columns of variables for rings of polynomials, or columns with components of the form "a variable minus $1$"{} for Laurent polynomials. 

For the case $n = 2$, we give an exhaustive description of the stabilizer of a column  as a one-parameter subgroup.
For $n=3$  we describe a stabilizer of a column  as extension of a  subgroup with a simple structure  by a specific group of congruence type of $2\times 2$ matrices over ring on $2$ variables. The idea of such description was originated in \cite{RomRes} and \cite{RomAutmet}. Such a description was successfully used in \cite{RomAutmet}  to prove that every automorphism of $M_r, r\geq 4,$ is induced by an automorphism of $F_r$, i.e., is tame.  Also this description was used in \cite{Prim3} to prove that $M_3$ contains primitive elements that are not images of primitive elements of $F_3.$  

At the last Section 3 we derive a number of corollaries of the obtained  results about stabilizers of columns in 
the case $n=3$. 

\begin{remark}
\label{re:1}
The column stabilizer in a $n\times n$ matrix group over a  field can be described as follows. 
Having included the stabilized vector  as the last element of the basis of the corresponding linear space, we get each of the stabilizer matrices in the half-expanded form when the last column is of the form $\left(\begin{array}{c}
0\\ ... \\0\\ 1\end{array}\right).$ The stabilizer consists of all matrices of the such form. It has as a homomorphic image the corresponding group of $(n-1)\times (n-1)$ matrices with the kernel of obvious structure. A similar description for matrix group over a ring is possible if at least one component of the stabilized column is invertible. See (\ref{eq:12}) below. If $c_3$ is invertible one has a homomorphism as above. Our approach is  useful for other cases. 
\end{remark}

\section{Preliminaries}
 
Let $K$ be an arbitrary commutative associative domain with identity. 
 For any $n \in \mathbb{N}$, let $\Lambda_n^{K}$ denotes the polynomial ring $\Lambda_{nn}^{\mathbb{K}}$ or the  Laurent polynomial ring $\Lambda_{n0}^{\mathbb{K}}.$  Let $\Delta_n^{K}$ stays for id($a_1, ..., a_n$) of $\Lambda_{nn}^{K}$ or  for id($a_1-1, ..., a_n-1$) of $\Lambda_{nn}^{K}$ (the augmentation ideal of $\Lambda_n^{K}$). Denote $c_i = a_i$ in the case of $\Lambda_n^K=\Lambda_{nn}^{K},$ and  $c_i= a_i-1$ in the case of $\Lambda_{nn}^{K}$. Further in the paper, we'll omit $K$ for brevity and simply write $\Lambda_n.$

Each element  $g\in \Lambda_k, k \leq n,$ has for every $t\geq 1$ the unique expression of the form 
\begin{equation} 
\label{eq:5}
g = \sum_{i=0}^tg_ic_k^i, 
\end{equation}
\noindent where $g_i\in \Lambda_{k-1}$ for $i = 0, ..., t-1$, and $g_t\in \Lambda_k.$ 
 Since every ring $\Lambda_k$ embeds into a field of fractions, we can consider a $\Lambda_k$-submodule $\Lambda_k^{(-)}=\Lambda_k+ c_k^{-1}\Lambda_k,$ and each element of $\Lambda_k^{(-)}$ has for each $t\geq 0$ the unique expression of the form 
\begin{equation}
\label{eq:6}
g = \sum_{i = -1}^tg_ic_k^i, 
\end{equation}
\noindent where $g_i\in \Lambda_{k-1}$ for $i = -1, ..., t-1$, and $g_t\in \Lambda_k.$

Denote 
\begin{equation}
\label{eq:7}
\bar{c}_n = \left(\begin{array}{c}
c_1\\
c_2\\...
\\
c_n\end{array}\right).
\end{equation}
All along the paper we assume  $n = 3$ (with one short exception for $n=2$ at the beginning of the next Section 2).  Let $G = $ Stab($\bar{c}_3)$ be the subgroup of GL($3, \Lambda_3$) consisting of all matrices $g$  such that
\begin{equation}
\label{eq:8}
g\bar{c}_3 = \bar{c}_3,
\end{equation}
\noindent
in other words, $G$ is the stabilizer of the column $\bar{c}_3$ in the group GL($3,\Lambda_3$). We'll show how to construct an explicit  matrix group $H\leq$ GL($2, \Lambda_2$) and a homomorphism $\rho$ of $G$ onto $H$ for which ker($\rho$) is well understood. In other words, we'll describe $G$ as an extension ker($\rho$) by im($\rho$) with explicitly desribed factors. We'll give a number of applications of these results.

\section{On the stabilizer of a column in   GL($3, \Lambda_3$) }

Before considering the case of $3\times 3$  matrices, we show how the stabilizer of the vector $\bar{c}_2$  is arranged in the group of $2\times 2 $  matrices over $\Lambda_2$.

\begin{proposition} 
\label{pr:1}
In \rm{GL}($2, \Lambda_2$), 
\begin{equation}
\label{eq:9}
\rm{Stab}(\bar{c}_2)  = \{\left(\begin{array}{cc}1+ac_1c_2& -ac_1^2\\
ac_2^2& 1-ac_1c_2\\
\end{array}
\right)\}, 
\end{equation}
\noindent where  $a\in \Lambda_2.$
\end{proposition}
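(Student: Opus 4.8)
The plan is to solve the defining equation $g\bar c_2=\bar c_2$ entrywise and then use the arithmetic of the domain $\Lambda_2$ to normalize the solution down to a single parameter. Writing $g=\begin{pmatrix} p & q\\ r & s\end{pmatrix}\in\mathrm{GL}(2,\Lambda_2)$, the condition $g\bar c_2=\bar c_2$ is equivalent to the two scalar relations
\[
(p-1)c_1+qc_2=0,\qquad rc_1+(s-1)c_2=0 .
\]
So the first task is purely to understand the syzygies of the pair $(c_1,c_2)$ in $\Lambda_2$.

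The next step is the divisibility/normalization argument. I would observe that $c_1,c_2$ form a regular sequence in $\Lambda_2$: reducing modulo $c_1$ gives the domain $\Lambda_2/(c_1)$ (namely $K[a_2]$ or $K[a_2^{\pm1}]$), in which the image of $c_2$ is a nonzero, hence non-zero-divisor, element. Applying this to $(p-1)c_1=-qc_2$ forces $c_1\mid q$; writing $q=-wc_1$ and cancelling the non-zero-divisor $c_1$ yields $p-1=wc_2$. The same reasoning on the second relation gives $r=w'c_2$ and $s-1=-w'c_1$. Thus every stabilizer element has the shape $p=1+wc_2,\ q=-wc_1,\ r=w'c_2,\ s=1-w'c_1$ for some $w,w'\in\Lambda_2$; at this point the solution set is a \emph{two}-parameter family, one parameter too many.

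The parameters are then collapsed by the determinant. A direct computation gives
\[
\det g=(1+wc_2)(1-w'c_1)-(-wc_1)(w'c_2)=1+wc_2-w'c_1,
\]
the $c_1c_2$-terms cancelling. Since $g\in\mathrm{GL}(2,\Lambda_2)$, this is a unit; the crux of the whole argument is to show it must equal $1$, so that $wc_2=w'c_1$. Granting $wc_2-w'c_1=0$, the regular-sequence (Koszul) property applies a second time: from $w'c_1=wc_2$ one gets $c_1\mid w$, say $w=ac_1$, and then $w'=ac_2$ with the \emph{same} $a\in\Lambda_2$. Substituting $w=ac_1$, $w'=ac_2$ into the four entries reproduces exactly the matrix in \eqref{eq:9}, giving the inclusion $\subseteq$. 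For the reverse inclusion I would note that the matrix in \eqref{eq:9} is $E+aM$ with $M=\begin{pmatrix}c_1c_2 & -c_1^2\\ c_2^2 & -c_1c_2\end{pmatrix}=\bar c_2\,(c_2,\,-c_1)$; then $M\bar c_2=\bar c_2(c_2c_1-c_1c_2)=0$ and $M^2=0$, so each $E+aM$ fixes $\bar c_2$, has determinant $1$, and $\{E+aM:a\in\Lambda_2\}\cong(\Lambda_2,+)$ is a subgroup, which is the promised one-parameter description.

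The main obstacle is the determinant step, i.e.\ ruling out nontrivial units: this is exactly where the structure of $\Lambda_2^{\times}$ enters. In the polynomial ring $\Lambda_{22}$ the units are the nonzero constants of $K$, and evaluating $\det g=1+wc_2-w'c_1$ at $a_1=a_2=0$ (where $c_1=c_2=0$) forces $\det g=1$; this is the case recovering Cohn's matrix \eqref{eq:1} as the value $a=1$. I expect the remaining bookkeeping (the two applications of the syzygy lemma and the entrywise substitution) to be routine once this single scalar constraint $\det g=1$ is in hand.
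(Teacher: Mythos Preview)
Your approach is the same as the paper's: parametrize all matrices with $g\bar c_2=\bar c_2$ by two scalars (the paper's form \eqref{eq:10}, your $w,w'$), then use the determinant to collapse to one parameter. You supply the regular-sequence/syzygy justification that the paper leaves implicit, and you correctly identify the step $\det g=1$ as the crux.

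That step, however, is where the Laurent case genuinely fails, so your expectation that it is ``routine'' is wrong. In $\Lambda_2=\Lambda_{20}^{K}=K[a_1^{\pm1},a_2^{\pm1}]$ with $c_i=a_i-1$, take $w=0$, $w'=-1$ in your parametrization:
\[
g=\begin{pmatrix}1&0\\ -(a_2-1)&a_1\end{pmatrix}
\]
satisfies $g\bar c_2=\bar c_2$ and $\det g=a_1\in\Lambda_2^{\times}\setminus\{1\}$, so $g\in\mathrm{Stab}(\bar c_2)$; but every matrix of the form \eqref{eq:9} has determinant $1$, hence $g$ is not among them. Thus the equality \eqref{eq:9} cannot hold over the Laurent ring as stated, and no amount of bookkeeping will close that gap. (The paper's own assertion ``invertible if and only if its determinant is $1$'' glosses over exactly this point.) Your argument is complete and correct for the polynomial ring $\Lambda_{22}^{K}$; in the Laurent case one must either restrict to $\mathrm{SL}(2,\Lambda_2)$ or enlarge the right-hand side of \eqref{eq:9} by a diagonal-unit factor.
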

\begin{proof}
Obviously, every matrix $A$ in M($2, \Lambda_2$)  such that $A\bar{c}_2= \bar{c}_2$ has the form 
\begin{equation}
\label{eq:10}
\left(\begin{array}{cc}1+ac_2& -ac_1\\
bc_2& 1-bc_1\\
\end{array}
\right).
\end{equation}
A matrix of the form (\ref{eq:10}) is invertible if and only if its determinant is $1$. By direct computation we obtain that this happens if and only if this matrix has the form (\ref{eq:9}). 
\end{proof}

Now $c_1, c_2, c_3$ are three pairwise non-associated prime elements of $\Lambda_3$ such that each element $g \in \Lambda_3$ can be uniquely expressed in the form
\begin{equation} 
\label{eq:11}
g = \sum_{i=0}^2g_ic_3^i, 
\end{equation}
\noindent
where $g_0, g_1 \in \Lambda_2$ and $g_2\in \Lambda_3$. Let $G$ is the stabilizer of the column $\bar{c}_3$ in the group GL($3,\Lambda_3$). Denote 
\begin{equation}
\label{eq:12}
C =\left(\begin{array}{ccc}
1& 0 & c_1\\
0& 1 & c_2\\
0& 0 & c_3\end{array}\right).
\end{equation}
For $A= (a_{ij})\in G$ we have the following equality
\begin{equation}
\label{eq:13}
C^{-1}AC = \left(\begin{array}{ccc}
1+a_{11}-a_{31}c_1c_3^{-1}& a_{12} - a_{32}c_1c_3^{-1} & 0\\
a_{21} - a_{31} c_2c_3^{-1}&1+a_{22}-a_{32}c_2c_3^{-1}   & 0\\
a_{31}c_3^{-1}&a_{32}c_3^{-1}  & 1
\end{array}\right).
\end{equation}

Denote 
\begin{equation}
\label{eq:14}R(A) = \left(\begin{array}{cc}
1+a_{11}-a_{31}c_1c_3^{-1}& a_{12} - a_{32}c_1c_3^{-1}\\
a_{21} - a_{31}c_2c_3^{-1}&1+a_{22}-a_{32}c_2c_3^{-1}\\
\end{array}\right) \in \rm{GL}(2, \Lambda_3^{(-)}).
\end{equation}
Then we have homomorphism 
\begin{equation}
\label{eq:15}
\theta : G \rightarrow \rm{GL}(2, \Lambda_3^{(-)}), \  \theta : A \mapsto R(A).
\end{equation}
Using (\ref{eq:6}), we obtain a decomposition of the form
\begin{equation}
\label{eq:16}
R=R(A) = E + R_2c_3^2+R_1c_3+ R_0+R_{-1}c_3^{-1},
\end{equation}
\noindent
where
\begin{equation}
\label{eq:17}
R_1, R_0, R_{-1}\in \textrm{M}_2(\Lambda_2),  R_{2}\in \textrm{M}_2(\Lambda_3).
\end{equation}
We put
\begin{equation}
\label{eq:18}
X = \left(\begin{array}{cc}
c_1c_2&-c_1^2\\
c_2^2&-c_1c_2\\
\end{array}\right).
\end{equation}
\begin{theorem}
\label{th:1}
In the above notation, there exist elements $\alpha , \beta , \gamma , \delta \in \Lambda_2$  such that
\begin{equation}
\label{eq:19}
R_{-1} = \alpha X, R_0 X = \beta X, XR_0 = \gamma X, XR_1X = \delta X.
\end{equation}
\end{theorem}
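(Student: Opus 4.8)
The plan is to reduce everything to the rank-one factorization of $X$. Set $u=\binom{c_1}{c_2}$ (so $u=\bar c_2$) and $v=\binom{c_2}{-c_1}$; then $X=uv^{T}$ and, decisively, $v^{T}u=c_2c_1-c_1c_2=0$, so $X$ has rank one and $X^2=0$. Under this lens each of the four assertions becomes a statement about how $R_{-1},R_0,R_1$ act on $u$ (on the right) and on $v$ (on the left). The only external input I will need is the syzygy lemma for the pair $c_1,c_2$: since $c_1,c_2$ is a regular sequence in the domain $\Lambda_2=K[a_1,a_2]$ (or its Laurent localization), any relation $xc_1+yc_2=0$ with $x,y\in\Lambda_2$ forces $(x,y)=\lambda(c_2,-c_1)$ for some $\lambda\in\Lambda_2$.

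First I would record the defining relations. Writing a stabilizer matrix as $A=E+(a_{ij})$, the equation $A\bar c_3=\bar c_3$ reads $\sum_{j}a_{ij}c_j=0$ for $i=1,2,3$. Grouping the first two rows gives $\hat A\,u=-c_3p$, where $\hat A=\left(\begin{smallmatrix}a_{11}&a_{12}\\ a_{21}&a_{22}\end{smallmatrix}\right)$ and $p=\binom{a_{13}}{a_{23}}$, while the third row gives $w^{T}u=-a_{33}c_3$, where $w=\binom{a_{31}}{a_{32}}$; both right-hand sides are divisible by $c_3$. A direct block computation rewrites (\ref{eq:14}) as $R(A)=E+S$ with $S=\hat A-c_3^{-1}uw^{T}$. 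Expanding the entries of $\hat A$ and $w$ in powers of $c_3$ as in (\ref{eq:11}) and comparing with (\ref{eq:16}) then yields $R_{-1}=-u(w^{(0)})^{T}$, $R_0=\hat A^{(0)}-u(w^{(1)})^{T}$ and $R_1=\hat A^{(1)}-u(w^{(2)})^{T}$, where the superscript $(k)$ denotes the coefficient of $c_3^{k}$ in the $\Lambda_2$-expansion.

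Next I would extract the constant ($c_3^{0}$) coefficients of the two divisibility relations. Since $-c_3p$ and $-a_{33}c_3$ have vanishing constant term, this gives $\hat A^{(0)}u=0$ and $(w^{(0)})^{T}u=0$. Feeding the latter into the syzygy lemma yields $w^{(0)}=\lambda v$, hence $R_{-1}=-\lambda\,uv^{T}=\alpha X$ with $\alpha=-\lambda$; applying the lemma to each of the two scalar equations contained in $\hat A^{(0)}u=0$ shows that every row of $\hat A^{(0)}$ is a $\Lambda_2$-multiple of $v^{T}$, i.e. $\hat A^{(0)}=m\,v^{T}$ for some column $m\in\Lambda_2^{2}$.

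Finally the remaining three identities drop out of these rank-one descriptions together with $v^{T}u=0$. For the right action, $R_0u=\hat A^{(0)}u-((w^{(1)})^{T}u)u=-((w^{(1)})^{T}u)u$, so $R_0X=(R_0u)v^{T}=\beta X$ with $\beta=-(w^{(1)})^{T}u$. Dually, $v^{T}R_0=(v^{T}m)v^{T}-(v^{T}u)(w^{(1)})^{T}=(v^{T}m)v^{T}$, so $XR_0=u(v^{T}R_0)=\gamma X$ with $\gamma=v^{T}m$. The last relation is forced by $\operatorname{rank}X=1$ alone: for any $R_1\in\mathrm{M}_2(\Lambda_2)$ one has $XR_1X=u(v^{T}R_1u)v^{T}=\delta X$ with $\delta=v^{T}R_1u$. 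All four scalars $\alpha,\beta,\gamma,\delta$ visibly lie in $\Lambda_2$, as required. I expect the main obstacle to be purely organizational: correctly reading the coefficient matrices $R_{-1},R_0,R_1$ off the expansion (\ref{eq:16}) and keeping straight which vector ($u$ on the right, $v$ on the left) governs each relation. The genuine mathematical content is concentrated in the single observation $X=uv^{T}$ with $v^{T}u=0$, supplemented by the regular-sequence syzygy lemma; relations two and three are where the stabilizer hypothesis enters (through $\hat A^{(0)}u=0$), whereas the fourth holds automatically.
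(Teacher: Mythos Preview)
Your argument is correct, but it follows a genuinely different route from the paper's. You work structurally: factoring $X=uv^{T}$ with $v^{T}u=0$, reading off $R_{-1},R_0,R_1$ as explicit block expressions in $A$, and then invoking the Koszul/syzygy lemma for the regular sequence $c_1,c_2$ to force $w^{(0)}\in\Lambda_2 v$ and $\hat A^{(0)}=m\,v^{T}$. The paper instead proves only $R_{-1}=\alpha X$ directly (from the third-row relation, essentially your first step) and then bootstraps the remaining three identities by a group-theoretic trick: it exhibits a particular $T\in G$ with $R(T)=E+Xc_3^{-1}$ and notes that, since $AT,\,TA,\,TAT$ again lie in $G$, the already-proved statement applied to these products forces the $c_3^{-1}$-components of $R(AT)$, $R(TA)$, $R(TAT)$---which work out to $R_0X$, $XR_0$, $XR_1X$ modulo known multiples of $X$---to be scalar multiples of $X$.

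Your approach is more elementary and self-contained; it makes visible that $XR_1X=\delta X$ is a pure rank-one fact needing no stabilizer hypothesis, isolates the regular-sequence input explicitly, and yields closed formulas for $\alpha,\beta,\gamma,\delta$ along the way. The paper's approach is slicker once one spots the auxiliary element $T$, and it foreshadows the multiplicative bookkeeping that drives the proof of the next theorem (that $\rho$ is a homomorphism): the same products $R(A)R(T)$, $R(T)R(A)$, $R(T)R(A)R(T)$ are exactly what one expands there.
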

\begin{proof}
Since $A\in G$, we have $a_{31}c_1+ a_{32}c_2+ a_{33}c_3=c_3.$ Then $(a_{31})_0c_1+ (a_{32})_0=0,$ and so $(a_{31})_0 = - \alpha c_2, (a_{32})_0 = \alpha c_1$
for some $\alpha \in \Lambda_2.$ It follows, that $R_{-1}=\alpha X.$
 
Note that $ T = E - e_{31}c_2 + e_{32}c_1\in G$ and $R(T) = E + Xc_3^{-1}.$ It follows that  
$R(A)R(T)$ has the $(-1)$-component $R_0X$ and so $R_0X= \beta X, \beta \in \Lambda_2.$ Similarly, $R(T)R(A)$ has the $(-1)$-component $XR_0$, hence $XR_0= \gamma X, \gamma X, \gamma \in \Lambda_2.$ At last, $R(T)R(A)R(T)$ has the $(-1)$-component $XR_1X$, hence $XR_1X= \delta X,  \delta \in \Lambda_2.$ 
\end{proof}
Thus,  we can associate the elements $\alpha , \beta , \gamma , \delta \in \Lambda_2$ with the matrix $A\in G$.  These elements are called {\it residues} of $A$ and of  $R$ with respect to $c_3$ Now we give explicit formulas for the residues  in terms of elements of the matrices $R_i, i = 1, 0, -1.$   These formulas are obtained by direct computations. 

\begin{equation}
\label{eq:20}
\alpha = -(a_{31})_0c_2^{-1}= (a_{32})_0c_1^{-1}, \beta = -(a_{31})_1c_1 - (a_{32})_1c_2, \gamma = (a_{11})_0 - (a_{21})_0c_1c_2^{-1},\end{equation}
$$  \delta = - a_{21})_1c_1^2 + (a_{12})_1c_2^2 + ((a_{11})_1 - (a_{21})_1)c_1c_2. 
$$

\begin{theorem}
\label{th:2}
The map 
\begin{equation}\label{eq:21}
\rho : G \rightarrow \rm{GL}_2(\Lambda_2), A \mapsto \left(\begin{array}{cc}
1+\beta & \alpha \\
\delta & 1+\gamma \end{array}\right)
\end{equation}
\noindent
ia a homomorphism.
\end{theorem}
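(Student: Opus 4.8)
The plan is to deduce multiplicativity of $\rho$ entirely from the single fact that the map $\theta$ of (\ref{eq:15}) is already a homomorphism, so that $R(AB)=R(A)R(B)$, by showing that the four residues of a product are assembled from those of the factors exactly as in the matrix product (\ref{eq:21}). I first note that the residues are well defined: the entry $c_1c_2$ of $X$ in (\ref{eq:18}) is nonzero and $\Lambda_2$ is a domain, so $\lambda X=\mu X$ forces $\lambda=\mu$, and hence each of $\alpha,\beta,\gamma,\delta$ is a genuine element of $\Lambda_2$. Since $R(A)$ is the upper-left $2\times2$ block of $C^{-1}AC$ and $C^{-1}EC=E$, we have $R(E)=E$, whence all residues of $E$ vanish and $\rho(E)=E$. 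Consequently, once $\rho(AB)=\rho(A)\rho(B)$ is established, $\rho(A)\rho(A^{-1})=\rho(E)=E$ shows that each $\rho(A)$ is invertible over $\Lambda_2$, so that $\rho$ does land in $\mathrm{GL}_2(\Lambda_2)$.

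For the bookkeeping I would introduce, for $i\in\{-1,0,1\}$, the coefficient-extraction map $\pi_i\colon \mathrm{M}_2(\Lambda_3^{(-)})\to \mathrm{M}_2(\Lambda_2)$ sending a matrix to its $c_3^{\,i}$-coefficient in the sense of (\ref{eq:6}); by (\ref{eq:16}) one has $\pi_{-1}(R)=R_{-1}$, $\pi_0(R)=E+R_0$ and $\pi_1(R)=R_1$. Because every entry has $c_3$-valuation at least $-1$, a product can reach a given low degree only in finitely many ways, giving $\pi_{-1}(MN)=\pi_{-1}(M)\pi_0(N)+\pi_0(M)\pi_{-1}(N)$, $\pi_0(MN)=\pi_{-1}(M)\pi_1(N)+\pi_0(M)\pi_0(N)+\pi_1(M)\pi_{-1}(N)$, and a four-term formula for $\pi_1(MN)$. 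Rewriting Theorem \ref{th:1} in normalized form, I would record the relations $\pi_{-1}(R)=\alpha X$, $\ \pi_0(R)X=(1+\beta)X$, $\ X\pi_0(R)=(1+\gamma)X$, $\ X\pi_1(R)X=\delta X$, together with the single structural identity $X^2=0$, which is all that is needed about $X$.

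The heart of the proof is then a direct computation of the four residues of $P:=R(A)R(B)=R(AB)$ from those of $R(A)$ and $R(B)$, the latter written $\alpha',\beta',\gamma',\delta'$ and $S_1=\pi_1(R(B))$. In each case I would apply the appropriate extraction formula, substitute $\pi_{-1}(R)=\alpha X$ and $\pi_{-1}(S)=\alpha'X$, and then collapse every resulting product to a single multiple of $X$ using the four normalized relations and $X^2=0$, reading off the residue by cancelling $X$. For the $(1,2)$-entry, for instance, $\pi_{-1}(P)=\alpha X\,\pi_0(S)+\pi_0(R)\,\alpha'X=\alpha(1+\gamma')X+(1+\beta)\alpha'X$, so $\alpha''=(1+\beta)\alpha'+\alpha(1+\gamma')$; the residues $\beta''$ and $\gamma''$ come out of $\pi_0(P)X$ and $X\pi_0(P)$ and reproduce the two diagonal entries of $\rho(A)\rho(B)$, while $\delta''$ comes out of $X\pi_1(P)X$ and reproduces the $(2,1)$-entry.

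The one delicate point, and the main obstacle, is that in (\ref{eq:16}) the coefficient $R_2$ lies in $\mathrm{M}_2(\Lambda_3)$ and so silently carries all powers $c_3^{\,j}$ with $j\ge2$; these lumped terms genuinely enter $\pi_1(P)$ through the index pairs $(-1,2)$ and $(2,-1)$ and could in principle corrupt $\delta''$. The resolution is that each of these two contributions places a factor $\pi_{-1}(R)=\alpha X$ at its left end or $\pi_{-1}(S)=\alpha'X$ at its right end, immediately adjacent to one of the outer factors $X$ in $X(\,\cdot\,)X$; since $X^2=0$ they vanish, leaving $X\bigl(\pi_0(R)S_1+R_1\pi_0(S)\bigr)X=\bigl((1+\gamma)\delta'+(1+\beta')\delta\bigr)X$. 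For $\alpha''$, $\beta''$ and $\gamma''$ no degree-$\ge2$ term can contribute at all, purely by the valuation count. Matching the four resulting expressions against the entrywise expansion of the product in (\ref{eq:21}) then completes the verification that $\rho$ is a homomorphism.
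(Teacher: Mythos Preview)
Your proposal is correct and follows essentially the same route as the paper: expand $R(AA')=R(A)R(A')$ in powers of $c_3$, extract the components $\tilde R_{-1},\tilde R_0,\tilde R_1$, and read off the residues $\tilde\alpha,\tilde\beta,\tilde\gamma,\tilde\delta$ via the relations of Theorem~\ref{th:1} to verify they coincide with the entries of $\rho(A)\rho(A')$. Your treatment is in fact slightly more careful than the paper's in two respects: you make explicit that $\rho$ lands in $\mathrm{GL}_2(\Lambda_2)$ (via $\rho(A)\rho(A^{-1})=E$), and you address the contribution of the degree-$\geq 2$ block $R_2\in\mathrm{M}_2(\Lambda_3)$ to $\tilde R_1$, disposing of it with $X^2=0$, a point the paper's display for $\tilde R_1$ glosses over.
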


\begin{proof}
Let $A'\in G$ and let 
\begin{equation}
\label{eq:22}
R'= R(A')= E + R_2'c_3^2+R_1'c_3+ R_0'+R_{-1}'c_3^{-1},
\end{equation}
\noindent
be decomposition of the form (\ref{eq:16}). Let $\alpha ', \beta ', \gamma ', \delta '$ be the residues of $A'$ and of $R'$ with respect to $c_3$.  

Then
\begin{equation}
\label{eq:23}
\tilde{R} = R(AA') = E + \tilde{R}_2c_3^2+\tilde{R}_1c_3+ \tilde{R}_0+\tilde{R}_{-1}c_3^{-1},
\end{equation}
\noindent
be decomposition of the form (\ref{eq:16}).
Here $\tilde{R}_{-1} = (E+R_0)R_{-1}' + R_{-1}(E+R_0) = (\alpha +\alpha '\beta + \alpha ' +\alpha \gamma ')X.$ Hence the corresponding residue is
\begin{equation}
\label{eq:24}
\tilde{\alpha} = \alpha +\alpha '\beta + \alpha ' +\alpha \gamma '.
\end{equation}
Further, $\tilde{R}_0 = E + R_0 + R_0' + R_0R_0'+ R_1R_{-1}' + R_{-1}R_1'$. Hence 
\begin{equation}
\label{eq:25}
\tilde{R}_0X = (1+ \beta + \beta ' + \beta \beta ' +\alpha \delta ')X. 
\end{equation}
\noindent
Hence 
\begin{equation}
\label{eq:26}
\tilde{\beta} = 1+ \beta + \beta ' + \beta \beta ' +\alpha \delta '
\end{equation}
\noindent
and 
\begin{equation}
\label{eq:27}
\tilde{\gamma} = 1+ \gamma + \gamma ' + \gamma \gamma ' + \delta \alpha  '.
\end{equation}
Then $\tilde{R}_1 = R_1 + R_1' + R_1R_0' + R_0R_1' + R_2R_{-1} + R_{-1}R_2'.$ Hence
 \begin{equation}
\label{eq:28}
\tilde{\delta} = 1+ \delta + \delta ' + \delta \beta ' +\gamma \delta '.
\end{equation}
Consequently,
\begin{equation} 
\label{eq:29}
\left(\begin{array}{cc}
1+\tilde{\beta} & \tilde{\alpha} \\
\tilde{\delta} & 1+\tilde{\gamma} \end{array}\right)\left(\begin{array}{cc}
1+\beta & \alpha \\
\delta & 1+\gamma \end{array}\right) = \left(\begin{array}{cc}
1+\beta ' & \alpha ' \\
\delta '& 1+\gamma '\end{array}\right), 
\end{equation}
equivalent to $\rho (AA') = \rho (A) \rho (A').$
\end{proof}
Now we are to compute  im($\rho$) = $\rho (G).$ Let GL($2, \Lambda_2, \Delta_2$) denote the congruence subgroup of GL($2, \Lambda_2$) with respect to the augmentation ideal $\Delta_2$ of $\Lambda_2$. We denote by GL($2, \Lambda_2, \Delta_2, \Delta_2^2$) 
the subgroup of GL($2, \Lambda_2$) consisting of the matrices corresponding to the following inclusion scheme:
\begin{equation}
\label{eq:30}
 \left(\begin{array}{cc}
1+\Delta_2 & \Lambda_2 \\
\Delta_2^2 & 1+\Delta_2\\
 \end{array}\right).
\end{equation}

\begin{theorem}
\label{th:3}
\rm{Im}($\rho $) = GL($2, \Lambda_2, \Delta_2, \Delta_2^2$). 
\end{theorem}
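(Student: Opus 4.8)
The statement asserts an equality of two subgroups of $\mathrm{GL}_2(\Lambda_2)$, so the plan is to prove the two inclusions separately. The inclusion $\mathrm{Im}(\rho)\subseteq \mathrm{GL}(2,\Lambda_2,\Delta_2,\Delta_2^2)$ is routine and follows from the residue formulas, whereas the reverse inclusion — surjectivity of $\rho$ onto the congruence-type group — carries all the difficulty.

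For the inclusion $\subseteq$ I would read the membership conditions straight off the explicit formulas (\ref{eq:20}). The entries $\beta=-(a_{31})_1c_1-(a_{32})_1c_2$ and $\delta$ lie in $\Delta_2$ and $\Delta_2^2$ respectively, since every monomial occurring in them carries a factor $c_1,c_2$ or one of $c_1^2,c_1c_2,c_2^2$. The only point needing an argument is $\gamma\in\Delta_2$. Here I would use the $c_3^0$-components of the stabilizer relations $a_{i1}c_1+a_{i2}c_2+a_{i3}c_3=0$ for $i=1,2$, namely $(a_{11})_0c_1+(a_{12})_0c_2=0$ and $(a_{21})_0c_1+(a_{22})_0c_2=0$. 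Since $c_1,c_2$ are non-associated primes, hence coprime, these syzygies force $c_2\mid(a_{11})_0$ and $c_1\mid(a_{22})_0$; rewriting (\ref{eq:20}) via the second relation gives the transparent form $\gamma=(a_{11})_0+(a_{22})_0$, in which both summands manifestly lie in $\Delta_2$. Thus $\rho(A)$ fits the scheme (\ref{eq:30}), and invertibility is automatic because $\rho$ already lands in $\mathrm{GL}_2(\Lambda_2)$.

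The surjectivity is the heart of the matter. For a given $B=\left(\begin{smallmatrix}1+\beta&\alpha\\\delta&1+\gamma\end{smallmatrix}\right)$ in $\mathrm{GL}(2,\Lambda_2,\Delta_2,\Delta_2^2)$ I would construct an explicit preimage $A\in G$ directly, rather than try to factor $B$ into elementary matrices. Prescribing the residues is the easy half: because $\beta,\gamma\in\Delta_2$ and $\delta\in\Delta_2^2$, the relations (\ref{eq:20}) can be solved for the low-order coefficients of the entries of $A$. One sets $(a_{31})_0=-\alpha c_2,\ (a_{32})_0=\alpha c_1$ to realize $\alpha$; chooses $(a_{31})_1,(a_{32})_1$ with $-(a_{31})_1c_1-(a_{32})_1c_2=\beta$ to realize $\beta$; writes $\gamma=g_1c_1+g_2c_2$ and puts $(a_{11})_0=g_2c_2,\ (a_{22})_0=g_1c_1$ (the syzygies then fixing $(a_{12})_0,(a_{21})_0$) to realize $\gamma$; and chooses the $c_3$-linear coefficients to realize $\delta$. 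The third column of $A$ is then forced by the stabilizer relations. The genuine obstacle is \emph{invertibility}: the matrix so produced need not have unit determinant, and the residues do not by themselves pin it down. My tool here is the identity $\det A=\det R(A)$, immediate from the block-triangular form of $C^{-1}AC$ in (\ref{eq:13}), which replaces the $3\times3$ determinant by the $2\times2$ determinant of $R(A)$, together with the nilpotency $X^2=0$ coming from the rank-one factorization $X=uv^{\top}$ with $u=(c_1,c_2)^{\top}$, $v^{\top}=(c_2,-c_1)$ and $v^{\top}u=0$. Rewriting (\ref{eq:19}) through this factorization, only $R_0u=\beta u$, $v^{\top}R_0=\gamma v^{\top}$ and $v^{\top}R_1u=\delta$ are constrained, while part of $R_0$, part of $R_1$, and all of $R_2$ remain free in $R(A)=E+\alpha Xc_3^{-1}+R_0+R_1c_3+R_2c_3^2$. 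Using $X^2=0$ to linearize $\det R(A)$, I would impose on the free part a vanishing-determinant and a trace condition (in the unipotent case $\beta=\gamma=0$ these read exactly $\det R_1=0$ and $\mathrm{tr}\,R_1=0$) so that $\det R(A)$ collapses to a unit, whence $A\in\mathrm{GL}(3,\Lambda_3)$. As a check, the lower unipotent $l(c_1^2)$ is realized by $E-c_3e_{21}+c_1e_{23}$ of determinant one, and the upper unipotents $\left(\begin{smallmatrix}1&\alpha\\0&1\end{smallmatrix}\right)$ by the determinant-one elements $T_\alpha=E-\alpha c_2e_{31}+\alpha c_1e_{32}$.

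I expect this determinant control to be the main obstacle, and it is an essential one rather than a technicality. By Cohn's result (\ref{eq:1}) the group $\mathrm{GL}(2,\Lambda_2,\Delta_2,\Delta_2^2)$ contains the matrix $E+X$, and more generally $E+wX$ for $w\in\Lambda_2$, all of which lie outside $\mathrm{GE}(2,\Lambda_2)$; hence no scheme reducing $B$ to the identity by elementary row and column operations can succeed, and these Cohn-type elements must be produced by hand. This is exactly the feature that the construction via prescribed residues, controlled through $\det A=\det R(A)$ and $X^2=0$, is designed to accommodate, and it is also what underlies the later corollary that the stabilizer is not generated by any finite set together with the tame stabilizer.
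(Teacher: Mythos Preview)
Your treatment of the inclusion $\mathrm{Im}(\rho)\subseteq\mathrm{GL}(2,\Lambda_2,\Delta_2,\Delta_2^2)$ is correct and more explicit than the paper, which passes over this direction entirely. (One slip: the stabilizer relations for rows $i=1,2$ read $a_{i1}c_1+a_{i2}c_2+a_{i3}c_3=c_i$, not $=0$; your syzygy argument survives the correction and yields $\gamma=(a_{11})_0+(a_{22})_0-1\in\Delta_2$.)

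For surjectivity your route is genuinely different from the paper's. The paper does not tune a single preimage. It writes down one explicit candidate $C=C(B)$ from chosen decompositions $\beta=\beta_1c_1+\beta_2c_2$, $\gamma=\gamma_1c_1+\gamma_2c_2$, $\delta=\sum\delta_{ij}c_ic_j$ (formula~(\ref{eq:33})), computes $\det C=\det B+r$ with $r$ an explicit polynomial in the $\beta_i,\gamma_i,\delta_{ij}$ and $c_3$, and observes that $r$ vanishes whenever $B$ depends on only one of $c_1,c_2$. It then \emph{factors} a general $B$ as $B_1B'$ with $B_1\in\mathrm{GL}(2,\Lambda_1)$ and $B'$ congruent to $E$ modulo $c_2$; multiplying $B'$ by a transvection $t_{21}(-(\delta_{12}+\delta_{12}')c_1c_2)$ produces $B''$ with $\beta_1=\gamma_1=\delta_{11}=\delta_{12}=\delta_{12}'=0$, and for such matrices the candidate $C''$ again has $r=0$. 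So the paper sidesteps the determinant problem by reducing to special $B$'s for which the naive lift is already invertible. Your remark that Cohn's theorem blocks factorization is therefore misplaced: the factorization is into one-variable and congruence pieces, not into elementaries.

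Your mechanism --- prescribe the low $c_3$-components of $A$ from the residues and use the remaining freedom to force $\det R(A)$ to be a unit --- is promising, and indeed a short computation with your tools shows the $c_3^{\le 0}$ part of $\det R(A)$ is automatically $\det B$: one has $\det X=\mathrm{tr}\,X=0$, $(E+R_0)u=(1+\beta)u$, $v^{\top}(E+R_0)=(1+\gamma)v^{\top}$, whence $\det(E+R_0)=(1+\beta)(1+\gamma)$, and $\mathrm{tr}(XR_1)=v^{\top}R_1u=\delta$. What remains is to annihilate the $c_3^{\ge 1}$ coefficients of $\det R(A)$, and this is exactly where your proposal stops short. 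You assert that ``a vanishing-determinant and a trace condition'' on $R_1$, together with freedom in $R_2$, suffice, but you neither verify compatibility of those conditions with the constraint $v^{\top}R_1u=\delta$ nor carry out the cancellation of the higher terms (which couple $R_0,R_1,R_2$ nonlinearly). As written this is a gap; it can very plausibly be closed along your lines, but until it is, the paper's factorization argument is the one that actually completes the proof.
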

\begin{proof}
Let 
\begin{equation}
\label{eq:31}
B= \left(\begin{array}{cc}
1+\beta & \alpha \\
\delta & 1+\gamma \end{array}\right)
\end{equation}
\noindent be an 
 invertible matrix corresponding to the inclusion scheme (\ref{eq:30}). Then we have the following  decompositions:
\begin{equation}
\label{eq:32}
\beta = \beta_1c_1 + \beta_2c_2, \gamma = \gamma_1c_1+\gamma_2c_2, \delta = \delta_{11}c_1^2
+\delta_{12}c_1c_2+\delta_{12}'c_1c_2+\delta_{22}c_2^2,
\end{equation}
where $\beta_1, ..., \delta_{22} \in \Lambda_2.$ 

First we define a matrix that stabilizes the column $\bar{c}$ such that  $\rho (C) = B$  
subject to its invertibility. 
\begin{equation}
\label{eq:33}
C= \left(\begin{array}{ccc}
1+\gamma_2c_2+\delta_{12}'c_3 & -\gamma_2c_1+\delta_{22}c_3& -\delta_{12}'c_1-\delta_{22}c_2 \\
-\gamma_1c_2-\delta_{11}c_3 & 1+\gamma_1c_1-\delta_{12}c_3& \delta_{11}c_1+\delta_{12}c_2\\
-\alpha c_2 - \beta_1c_3 & \alpha c_1 - \beta_2c_3 & 1 +\beta_1c_1 +\beta_2c_2\\
 \end{array}\right).
\end{equation}

Obviously, $C\bar{c} = \bar{c}$. By direct computation we obtain that 
\begin{equation}
\label{eq:34}
\mathrm{det}(C) = \mathrm{det}(B) + r, 
\end{equation} 
$$r= \delta_{12}'c_3 + \gamma_1\delta_{12}' c_1c_3 - \delta_{12}c_3 - \delta_{12}'\delta_{12}c_3^2 - \gamma_2\delta_{12}c_2c_3 +
$$
$$
+\beta_2\delta_{12}'c_2c_3 - \beta_1\delta_{12}c_1c_3 - \beta_1\delta_{22}c_2c_3+\gamma_1\delta_{22}c_2c_3 - \gamma_2\delta_{11}c_1c_3 + \delta_{11}\delta_{22}c_3^2 + \beta_2\delta_{11}c_1c_3.
$$
Suppose, that $B\in$ GL($2, \Lambda_1$), then $\beta_2, \gamma_2, \delta_{12}', \delta_{12}, \delta_{22} =0,$ hence $r =0$, and $C\in G.$ Similarly we obtain, that $C \in G$ if $B$ does not depend of $c_1$. 
Since $B = B_1B'$ where $B_1$ does not depend of $c_2$ and $B'$ lies in the congruence subgroup with respect to $c_2$, i.e., 
\begin{equation}
\label{eq:35}
B'\in\left(\begin{array}{cc}
1+\Lambda_2c_2 & \Lambda_2c_2 \\
\Delta_2c_2 & 1+\Lambda_2c_2 \end{array}\right).
\end{equation} 
Both matrices, $B_1$ and $B'$ are invertible, and $B_1\in$ im($G$). There are decompositions (\ref{eq:20}) in which $\beta_1, \gamma_1,  \delta_{11} =0.$ Note that transvection $t=t_{21}((-\delta_{12} - \delta_{12}')c_1c_2$ lies in $B$ and has a preimage in $G.$ Then   
\begin{equation}
\label{eq:36}
B''=B't\in \left(\begin{array}{cc}
1+\Lambda_2c_2 & \Lambda_2c_2 \\
\Lambda_2c_2^2 & 1+ \Lambda_2c_2 \end{array}\right). 
\end{equation}
The elements of $B''$ are decompositions (\ref{eq:20}) such that $\beta_1, \gamma_1, \delta_{12}', \delta_{12}, \delta_{11}=0.$ The corresponding matrix $C''$ defined in the form  (\ref{eq:33}) is invertible because its determinant is equal to det($B''$). Hence $B''\in $ im($\rho$), and $B \in $ im($\rho$). 
\end{proof}
Then $G$ is  an extension of ker($\rho$), that is described by formulas (\ref{eq:20}), by im($\rho$), that is consisting of all invertible matrices corresponding to (\ref{eq:30}). By the way, we note, that ker($\rho$) contains the subgroup $H$ of all  matrices in $G$ of the form 
\begin{equation}
\label{eq:37}
 \left(\begin{array}{ccc}
1+\Lambda_3c_3^2 & \Lambda_3c_3^2& \Lambda_3c_3 \\
\Lambda_3c_3^2 & 1+ \Lambda_3c_3^2& \Lambda_3c_3\\
\Lambda_3c_3^2&\Lambda_3c_3^2&1 + \Lambda_3c_3\\
 \end{array}\right). 
\end{equation}
The quotient im($\rho$)$/H$ is easily understood. 

\section{On the tame stabilizer of a column in GL($3, \Lambda_3$).}

In general, the stabilizer $G$ of $\bar{c_n}$ in GL($n, K$) for any commutative associative ring $K$ with identity 
contains each matrix of the form
\begin{equation}
\label{eq:38}
T_{i, j, k}(a) = E + ac_kE_{ij} - ac_jE_{ik}, \  \rm{for} \   i \not= j, k; j<k; a \in K.
\end{equation}

Also, given the Proposition  \ref{pr:1}, $G$ contains each matrix of the form
\begin{equation}
\label{eq:39}
S_{i,j}(a) = E + ac_ic_jE_{ii} - ac_i^2E_{ij} + ac_j^2E_{ji} - a c_ic_jE_{jj}, \rm{for} \ i < j, a \in K
\end{equation}
\noindent
(see (\ref{eq:9})). 

We denote by $G_t$ ({\it the tame stabilizer}) the subgroup of $G$ generated by  all matrices $T_{i,j,k}(a)$ and $S_{i,j}(a)$ defined by (\ref{eq:38}) and (\ref{eq:39}), respectively. A question arises: Does $G_t$ coincides with $G$? For $n=2$, the answer "Yes"{} is obvious  by Proposition \ref{pr:1}. We'll show below that the answer for  $n=3$ is "No".

Now, let $G\leq$ GL($3, \Lambda_3$) be the stabilizer of $\bar{c}_3$ and let $G_t\leq G$ be the corresponding tame stabilizer. 
As above, $\Lambda_3$ denotes $\Lambda_{30}^{\mathbb{F}}$, 
$\Lambda_{30}^{\mathbb{Z}},$ or $\Lambda_{30}^{\mathbb{Z}}.$ 

We exlude Laurent polynomial rings    
$\Lambda_{33}^{\mathbb{F}}$ over a field. The following results show a connection between $G_t$ and GE($2, \Lambda_2$), that allows to show that $G_t$ is small with respect to $G.$  

\begin{proposition}
\label{pr:2}
 
\rm{im}($G_t$) $\leq $ \rm{GL}($2, \Lambda_2$),
\end{proposition}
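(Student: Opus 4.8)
The plan is to use that $\rho$ is a homomorphism (Theorem~\ref{th:2}) and that, by definition, $G_t$ is generated by the finitely many one-parameter families $T_{i,j,k}(a)$ and $S_{i,j}(a)$ of (\ref{eq:38}) and (\ref{eq:39}). Consequently $\rho(G_t)$ is generated by the images of these generators, so it suffices to compute $\rho$ on each one and to check that the result lies in the elementary subgroup $\mathrm{E}(2,\Lambda_2)$, hence in $\mathrm{GE}(2,\Lambda_2)\leq\mathrm{GL}(2,\Lambda_2)$. For $n=3$ the generators are only $T_{1,2,3}(a),T_{2,1,3}(a),T_{3,1,2}(a)$ and $S_{1,2}(a),S_{1,3}(a),S_{2,3}(a)$, so the whole statement reduces to a finite, mechanical check.

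For each generator $A$, written as $A=E+(a_{ij})$, I would read off its (few) nonzero entries $a_{ij}$ and substitute them into the explicit residue formulas (\ref{eq:20}) to obtain $\alpha,\beta,\gamma,\delta$, and then form $\rho(A)=\left(\begin{smallmatrix}1+\beta&\alpha\\\delta&1+\gamma\end{smallmatrix}\right)$. Each generator has entries of $c_3$-degree at most $2$, so the expansion (\ref{eq:11}) and the coefficients $(a_{ij})_0,(a_{ij})_1$ are read off by inspection. The expected outcome is that every generator maps to a single transvection or to the identity: $\rho(T_{3,1,2}(a))=t_{12}(-a)$, while $T_{1,2,3}(a)$, $T_{2,1,3}(a)$ and $S_{1,3}(a)$ map to lower transvections $t_{21}(\ast)$, and $S_{1,2}(a)$, $S_{2,3}(a)$ map to the identity. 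In every case $\rho(A)\in\mathrm{E}(2,\Lambda_2)$, which yields the proposition.

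The computation is routine, but two points require attention. First, the formulas for $\alpha$ and $\gamma$ in (\ref{eq:20}) carry the formal inverses $c_1^{-1},c_2^{-1}$, so one must check that these cancel and leave elements of $\Lambda_2$; this is exactly the consistency $-(a_{31})_0c_2^{-1}=(a_{32})_0c_1^{-1}$ valid on $G$, and it manifests as a genuine cancellation on individual generators (for instance $\gamma=ac_1c_2-ac_2^2\cdot c_1c_2^{-1}=0$ for $S_{1,2}(a)$). Second, one should record that each listed generator actually lies in $G$, i.e.\ stabilizes $\bar{c}_3$, before invoking the residue machinery; this is immediate from the shapes (\ref{eq:38}) and (\ref{eq:39}). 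There is no real obstacle here—the residue formulas (\ref{eq:20}) do the work, and the content of the proposition is precisely that no tame generator produces a non-elementary residue matrix. This is what is later leveraged: since $\mathrm{im}(\rho)=\mathrm{GL}(2,\Lambda_2,\Delta_2,\Delta_2^2)$ by Theorem~\ref{th:3} contains Cohn's matrix (\ref{eq:1}), which lies outside $\mathrm{GE}(2,\Lambda_2)$, the inclusion $\rho(G_t)\leq\mathrm{GE}(2,\Lambda_2)$ will separate $G_t$ from $G$.
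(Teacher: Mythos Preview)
Your approach is correct and essentially the same as the paper's: compute the residues of each generator via (\ref{eq:20}) and observe that $\rho$ sends every generator to a triangular (hence elementary) matrix in $\mathrm{GE}(2,\Lambda_2)$. The paper streamlines the case analysis slightly by noting at once that whenever $A=E+A'$ with $A'$ supported on a single row one has $\alpha=0$ or $\delta=0$ (so $\rho(A)$ is triangular), which handles all $T_{i,j,k}(a)$ simultaneously, and then checks $\alpha=0$ for each $S_{i,j}(a)$; your generator-by-generator computation is just the explicit unrolling of this.
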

\begin{proof}
If the matrix $A=(a_{ij})\in G$ has the form $E + A'$, and all  rows of the matrix $A'$  are zero except for one row, then  $\rho (A)$   lies in the subgroup GE($2, \Lambda_2$). Indeed, formulas (\ref{eq:20}) show that in this case $\alpha = 0$ or $\delta = 0$. Then $\rho (A)$ is a triangular matrix. But every triangular matrix lies obviously  in GE($2, \Lambda_2$). This proves the statement for any matrix $A = T_{i,j,k}(a).$

By formulas (\ref{eq:20}) for any matrix $S_{i,j}(a)$, one has  $\alpha = 0,$ 
and we conclude as above. 
\end{proof}
\begin{theorem}
\label{th:4}
Let $G$ be the stabilizer of the column $\bar{c}_3$ in 
GL($3, \Lambda_{33}^{\mathbb {Z}}$).  Then for every finite subset $L \subseteq G$
\begin{equation}
\label{eq:40}
gp(L, G_t) \not= G.
\end{equation}
\end{theorem}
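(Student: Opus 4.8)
The plan is to show that the tame stabilizer $G_t$, after passing through the homomorphism $\rho$, lands inside the proper subgroup $\mathrm{GE}(2,\Lambda_2)$ of $\mathrm{GL}(2,\Lambda_2)$, and then to exploit the fact that over the Laurent polynomial ring $\Lambda_{30}^{\mathbb{Z}}$ the image $\rho(G)=\mathrm{GL}(2,\Lambda_2,\Delta_2,\Delta_2^2)$ is genuinely larger than anything one can reach from $\mathrm{GE}$ by adjoining finitely many elements. By Proposition \ref{pr:2} we already know $\mathrm{im}(G_t)\le \mathrm{GE}(2,\Lambda_2)$, so the homomorphic image of $gp(L,G_t)$ under $\rho$ lies in the subgroup generated by $\rho(L)$ together with $\mathrm{GE}(2,\Lambda_2)\cap \mathrm{im}(\rho)$. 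Since $\rho(L)$ is a finite set, it suffices to prove that $\mathrm{im}(\rho)=\mathrm{GL}(2,\Lambda_2,\Delta_2,\Delta_2^2)$ is \emph{not} generated by $\mathrm{GE}(2,\Lambda_2)\cap\mathrm{im}(\rho)$ together with any finite subset; then no finite $L$ can make $gp(L,G_t)$ surject onto $\mathrm{im}(\rho)$, and in particular $gp(L,G_t)\neq G$.

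The core of the argument is therefore a statement purely about $2\times 2$ matrix groups over $\Lambda_2=\Lambda_{20}^{\mathbb{Z}}$. Here I would invoke the Bachmuth--Mochizuki theorem (\ref{eq:2}), which asserts $\mathrm{GL}(2,\Lambda_{20}^{\mathbb{Z}})\neq\mathrm{GE}(2,\Lambda_{20}^{\mathbb{Z}})$. The classical strengthening of this result (due to the same circle of ideas, and the reason the authors restrict to the $\mathbb{Z}$-case and exclude $\Lambda_{33}^{\mathbb{F}}$) is that the quotient $\mathrm{GL}(2,\Lambda_{20}^{\mathbb{Z}})/\mathrm{GE}(2,\Lambda_{20}^{\mathbb{Z}})$, or the relevant relative analogue inside the congruence subgroup, is infinitely generated as a group. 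First I would intersect everything with $\mathrm{GL}(2,\Lambda_2,\Delta_2,\Delta_2^2)$ and verify that the non-tame phenomenon survives this restriction: that is, that $\mathrm{GL}(2,\Lambda_2,\Delta_2,\Delta_2^2)$ modulo its intersection with $\mathrm{GE}(2,\Lambda_2)$ is still infinitely generated. Granting this, if $gp(L,G_t)=G$ then applying $\rho$ would give $\mathrm{GL}(2,\Lambda_2,\Delta_2,\Delta_2^2)=gp(\rho(L),\ \mathrm{GE}(2,\Lambda_2)\cap\mathrm{im}(\rho))$, exhibiting the infinitely generated quotient as finitely generated, a contradiction.

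The step I expect to be the main obstacle is quantifying precisely \emph{how much larger} $\mathrm{im}(\rho)$ is than its tame part, i.e.\ establishing the infinite generation of the quotient rather than merely its nontriviality. The plain Bachmuth--Mochizuki result only gives one non-$\mathrm{GE}$ matrix; to defeat \emph{every} finite $L$ one needs that adjoining any finite set to $\mathrm{GE}(2,\Lambda_2)$ still fails to exhaust $\mathrm{GL}(2,\Lambda_{20}^{\mathbb{Z}})$. I would handle this by producing an explicit infinite family of cosets, for example using a length or degree invariant on the entries that is additive (or subadditive up to a bounded error) under the generators coming from $\mathrm{GE}$ and under multiplication by any fixed finite set, while the target group contains matrices of unbounded invariant. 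A convenient device is the homomorphism from the abelianization of $\mathrm{IAut}(M_r)$ modulo tame automorphisms, or equivalently a valuation on the off-diagonal entry of the matrix in (\ref{eq:1})-type examples, specialized to $\Lambda_{20}^{\mathbb{Z}}$. Once such an invariant is in place the finite-generation contradiction is immediate, so the entire weight of the proof rests on constructing it and checking its behavior on $\mathrm{GE}$-generators.
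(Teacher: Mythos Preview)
Your high-level strategy---push through $\rho$, use Proposition~\ref{pr:2} to place $\rho(G_t)$ inside $\mathrm{GE}(2,\Lambda_2)$, and then argue that no finite enlargement of the $\mathrm{GE}$-part can reach all of $\mathrm{im}(\rho)$---is exactly the paper's approach. Two points of divergence are worth noting.

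First, you treat the strengthened Bachmuth--Mochizuki statement (that $\mathrm{GL}(2,\Lambda_2^{\mathbb{Z}})$ is not generated by $\mathrm{GE}(2,\Lambda_2^{\mathbb{Z}})$ together with \emph{any} finite set) as something to be established from scratch via a degree/length invariant. The paper simply cites \cite{BM} for this strengthened form and uses it as a black box; so the invariant construction you flag as ``the main obstacle'' is not needed here. (Incidentally, the theorem is stated for $\Lambda_{33}^{\mathbb{Z}}$, so the relevant $2$-variable ring is $\Lambda_{22}^{\mathbb{Z}}$, not the Laurent ring $\Lambda_{20}^{\mathbb{Z}}$ you wrote; the paper's citation is meant to cover this case.)

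Second, your plan to pass from the full $\mathrm{GL}(2,\Lambda_2)$ statement down to the congruence-type subgroup $\mathrm{GL}(2,\Lambda_2,\Delta_2,\Delta_2^2)=\mathrm{im}(\rho)$ by re-proving infinite generation inside that subgroup is more work than necessary. The paper instead takes a single witness $A\in\mathrm{GL}(2,\Lambda_2)\setminus gp(\mathrm{GE}(2,\Lambda_2),\rho(L))$, multiplies it on one side by $(E+A_0)^{-1}\in\mathrm{GE}(2,\mathbb{Z})$ (where $A_0$ is the specialization $c_i\mapsto 0$) to force the congruence conditions on the diagonal, and then by a single transvection $t_{21}(-q_1c_1-q_2c_2)$ to force the $(2,1)$ entry into $\Delta_2^2$. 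Since each correction lies in $\mathrm{GE}(2,\Lambda_2)$, the modified matrix $\bar A$ remains outside $gp(\mathrm{GE}(2,\Lambda_2),\rho(L))$ but now sits in $\mathrm{im}(\rho)$, giving the contradiction directly. This coset-adjustment trick replaces your entire ``infinite generation survives restriction'' step with two explicit left-multiplications.
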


\begin{proof}
By Bachmuth and Mochizuki result \cite{BM}, if $n\geq 2$ then GL($2, \Lambda_n^{\mathbb{Z}})$ can not be generated by any finite subset together with the subgroup GE($2, \Lambda_n^{\mathbb{Z}}.$ 

Hence, 
\begin{equation}
\label{eq:41}
gp(\rm{GE}(2, \Lambda_{22}^{\mathbb {Z}}), \rho (L)) \not= \rm{GL}(2,\Lambda_{22}^{\mathbb {Z}}). 
\end{equation}
Then there is a matrix $A$ that belongs to the difference between the two sides of (\ref{eq:41}). We'll show that there is a similar matrix with elements corresponding to the scheme (\ref{eq:30}). To prove this assertion, we define the image  $E+A_0$ of $A$ that lies in GL($2, \mathbb{Z}$)
under specialization  homomorphism
GL($2,\Lambda_{22}^{\mathbb{Z}}$)$\rightarrow$ GL($2, \mathbb{Z}$) defined by the map 
$c_i\mapsto 1, i = 1, 2$.

In  other words, $A_0$ is  the $0$ part of $A$ under the decomposition form (\ref{eq:16}). Then $E+A_0 \in$ GE($2, \mathbb{Z}).$ We multiply $A$ by $(E+A_0)^{-1}$ and get new matrix $\tilde{A}$ with the same property. Suppose, that  its (21) component $\tilde{a}_{21}= q_1c_1+q_2c_2 + q_3,$ where $q_1, q_2\in \mathbb{Z}$, 
$q_3\in \Delta_2^{\mathbb{Z}}$ does not lie in $\Delta_{22}^{\mathbb{Z}}.$  This means that $q_1\not= 0$ or $q_2\not=0.$ Then we multiply $\tilde{A}$ by $t_{21}(-q_1c_1- q_2c_2)$ and obtain matrix $\bar{A}$ that lies in the difference the two sides   of (\ref{eq:41}) and corresponds to the scheme (\ref{eq:30}). Thus $\bar{A}\in $ im($\rho$) but has no preimages in gp($\rm{GE}(2, \Lambda_{22}^{\mathbb {Z}}), \rho (L)$). 
\end{proof}

\section*{Conclusion}

The main results of this paper were obtained for matrix groups over polynomial rings under fairly rigorous assumptions regarding a stabilized vector. The similar results can be obtained for other rings. For example, Theorem 1 can be proved for any polynomial ring over a commutative associative ring with identity $K$ over one variable $x$ for the corresponding stabilized vector $c = \left(\begin{array}{c}k_1\\ k_2\\ x\\ \end{array}\right)$, where $k_1, k_2$ are arbitrary elements of $K.$ The main advantage of the proposed method is the fact that  we move by the homomorphism $\rho$ from matrices over the module $\Lambda_3 + c_3^{-1}\Lambda_3$ to matrices over $\Lambda_2$. This process can be considered as  an elimination of the residue $c_3^{-1}.$ Such moving in a number of cases allows the using of the corresponding induction. This approach also demonstrates the parallelism of theories of groups of automorphisms of groups and matrix groups that exists for a number of well-known groups. This allows us to use the results on matrix groups to describe automorphism groups.

\bigskip
{ROMAN'KOV V. A.,} {Chief Researcher,  Sobolev Institute of Mathematics SB RAS, Novosibirsk, Russia}

{romankov48@mail.ru}


\begin{thebibliography}{1}
\bibitem{Bir} {Birman\;J.\,S.}  {An inverse function theorem for free groups.} Proc. Amer. Math. Soc., 1973, vol.\,41, pp.\,634--638. 
  \bibitem{Ess}
    {Roman'kov\;V.\,A.}
        Essays in algebra and cryptology. Solvable groups.
        Omsk, Omsk State University Publishing House, 2017. 267\,p.
\bibitem{TimBook}    {Timoshenko\;E.\,I.} {Endomorphismi i universalnie teorii razreshimih grupp. [Endomorphisms and universal theories of solvable groups.]} Novosibirsk, Novosibirsk State Technical University, 2011. 327\,p. (In Russian)
\bibitem{Bach}{Bachmuth\;S.} {Automorphisms of free metabelian groups.} Trans. Amer. Math. Soc., 1965, vol.\,118, pp.\,93--104. 
\bibitem{U}{Umirbaev\;U.\,U.} {Primitive elements of free groups.} Russian  Math. Surveys, 1994, vol.\,49, no.\,1, pp.\, 184--185.
\bibitem{RomCrit}{Roman'kov\;V.\,A.} {Criteria for the primitivity of a system of elements of a free metabelian group} Ukrainian Mathematical Journal, 1991, vol.\,43, no.\,7, pp.\,930--935. 
  \bibitem{Prim3}
    {Roman'kov\;V.\,A.}
   {Primitive elements of free groups of rank 3.}
       Mathematics of the USSR - Sbornik, 1992, vol.\,73, no.\,2, pp.\,445--454. 
\bibitem{TimCrit}{Timoshenko\;E.\,I.}  {Algorithmic solvability of the problem of inclusion in a basis of a free metabelian group.} Math. Notes, 1992, vol.\,51, no.\,3, pp.\,304--307.
\bibitem{Sus} {Suslin\;A.\,A.}
{On the structure of the special linear group over polynomial rings.} 
Math. USSR-Izv., 1977, vol.\,11, no.\,2, pp.\,221--238.
\bibitem{Cohn} {Cohn\;P.\,M.} {On the structure of the GL$_2$  of a ring.}  Publ. I.H.E.S., 1966, vol.\,30, pp.\, 365--413. 
  \bibitem{BM} {Bachmuth\;S. and Mochizuki\;H.\;Y.}
 {$E_2\not= SL_2$ for most Laurent polynomial rings.}
       Amer. J. Math., 1982, vol.\,104, pp.\,1181--1189. 
\bibitem{Shp} {Shpilrain\;V.} {Fixed points of endomorphisms of a free metabelian group.} 
Math. Proc. Camb. Phil. Soc., 1998, vol.\,123, no.\,7, pp.\,75--83. 
\bibitem{RomRes} {Roman'kov\;V.\,A.} {Gruppi matric vichetov. [Groups of matrices of residues.]} // Problems connecting abstract and applied algebra (V.\,N\, Remeslennikov, Editor),  Novosibirsk, Vichislitelnij Centr of Sib. Otdelenija Akademii Nauk SSSR, 1985, pp.\,35--52. (In Russian)
\bibitem{RomAutmet} {Roman'kov\;V.\,A.} {Gruppi avtomorphismov svobodnih metabelevih grupp.  [Groups of automorphisms of free metabelian groups.]} // Problems connecting abstract and applied algebra (V.\,N.\; Remeslennikov, Editor),  Novosibirsk, Vichislitelnij Centr  Sibirskogo Otdelenija Akademii Nauk SSSR, 1985, pp.\,53--80. (In Russian)

\end{thebibliography}
\end{document}